\documentclass[oneside,11pt]{amsart}
\setlength{\textwidth}{165mm} \setlength{\textheight}{200mm}
\setlength{\oddsidemargin}{0pt} \setlength{\evensidemargin}{0pt}

\usepackage{amssymb,amscd,amsmath,latexsym}
\usepackage[mathcal]{euscript}
\usepackage{hyperref}

\newtheorem{thm}{Theorem}[section]
\newtheorem{cor}[thm]{Corollary}
\newtheorem{lem}[thm]{Lemma}
\newtheorem{prop}[thm]{Proposition}
\theoremstyle{definition}
\newtheorem{defin}[thm]{Definition}

\newtheorem{conj}[thm]{Conjecture}
\newtheorem{prob}[thm]{Problem}
 \numberwithin{equation}{section}
 \numberwithin{figure}{section}

\theoremstyle{remark}
\newtheorem{rmk}[thm]{Remark}

\newcommand{\pbp}{$p  \times p$ }
\newcommand{\Res}{\operatorname{Res}}
\begin{document}

\title[Specht module complexity]
{\bf The complexity of certain Specht modules for the symmetric group}
\author{\sc David J. Hemmer}
\address{Department of Mathematics\\ University at Buffalo, SUNY \\
244 Mathematics Building\\Buffalo, NY~14260, USA}
\thanks{Research of the  author was supported in part by NSF
grant  DMS-0556260} \email{dhemmer@math.buffalo.edu}

\date{November 2008}

\subjclass[2000]{Primary 20C30}

\begin{abstract}
 During the 2004-2005 academic year the VIGRE algebra research group at the University of Georgia computed the complexities of certain Specht modules $S^\lambda$ for the symmetric group $\Sigma_d$, using the computer algebra program Magma. The complexity of an indecomposable module does not exceed the $p$-rank of the defect group of its block. The Georgia group conjectured that, generically, the complexity of a Specht module attains this maximal value; that it is smaller precisely when the Young diagram of $\lambda$ is built out of $p \times p$ blocks. We prove one direction of this conjecture by showing these Specht modules do indeed have less than maximal complexity. It turns out that this class of partitions, which has not previously appeared in the literature, arises naturally as the solution to a question about the $p$-weight of partitions and branching.
\end{abstract}
\maketitle

\section{Introduction}
\label{sec: Introduction}
\subsection{}
In 2004 the VIGRE algebra research group at the University of Georgia computed some examples of the complexity of Specht modules $S^\lambda$ for the symmetric group $\Sigma_d$. Their data led them to focus on partitions $\lambda$ of a curious form, specifically:

\begin{defin}
\label{defin: pxpdef} A partition $\lambda \vdash d$ is $p \times p$  if $\lambda=(\lambda_1^{a_1}, \lambda_2^{a_2}, \ldots, \lambda_s^{a_s})$ where $p\mid \lambda_i$ and $p \mid a_i$ for all $i$.
\end{defin}
Such $\lambda$ can exist only if $p^2 \mid d$. Equivalently, $\lambda$ is \pbp if both $\lambda$ and its transpose $\lambda'$ are of the form $p\tau$. Also equivalently, the Young diagram of $\lambda$ is built from $p \times p$ blocks.

Now suppose $S^\lambda$ is in a block $B(\lambda)$ of weight $w$ corresponding to a $p$-core $\tilde{\lambda} \vdash d-pw$. Then the defect group of $B(\lambda)$ is isomorphic to a Sylow $p$-subgroup of $\Sigma_{pw}$ and has $p$-rank $w$. In particular, the maximum complexity of any module in the block $B(\lambda)$  is $w$. The VIGRE group made the following conjecture:

\begin{conj}[UGA VIGRE \footnote{This conjecture and some discussion can be found at \href{http://www.math.uga.edu/~nakano/vigre/vigre.html}
{http://www.math.uga.edu/$\sim$ nakano/vigre/vigre.html}.}]
\label{conj: Vigre}
Let $S^\lambda$ be in a block $B$ of weight $w$. Then the complexity of $S^\lambda$ is $w$ if and only if $\lambda$ is not \pbp.
\end{conj}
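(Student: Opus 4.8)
My plan only targets the implication that actually beats the defect-group bound: if $\lambda$ is \pbp then $c(S^\lambda) < w$. The converse asks one to produce a \emph{full}-dimensional variety, and I expect it to be the harder half. Since $S^\lambda$ is indecomposable with vertex $Q$ contained in the defect group $D$, one has $c(S^\lambda) = c_Q(S^\lambda\!\downarrow_Q)$, and by Quillen stratification (Avrunin--Scott) this equals $\max_{E'\le Q}\dim V_{E'}(S^\lambda\!\downarrow_{E'})$ over elementary abelian $E'$. Every elementary abelian $p$-subgroup of $\Sigma_{pw}$ has rank $\le w$, with equality only for the subgroups generated by $w$ disjoint $p$-cycles, and these form a single conjugacy class represented by $E := \langle\sigma_1,\dots,\sigma_w\rangle \cong (\mathbb{Z}/p)^w$. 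As $c(S^\lambda)\le w$ is already known, the whole statement collapses to showing
\[ V_E(S^\lambda\!\downarrow_E)\subsetneq\mathbb{A}^w \]
(and if $Q$ happens to have rank $<w$ we are finished immediately).

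To reach $V_E$ I would restrict through the Young subgroup $Y=\Sigma_p^{\times w}\times\Sigma_{d-pw}\supseteq E$, placing $\sigma_i$ in the $i$-th factor. Then $\Res^{\Sigma_d}_Y S^\lambda$ carries a Specht filtration with sections $S^{\nu^{(1)}}\boxtimes\cdots\boxtimes S^{\nu^{(w)}}\boxtimes S^{\rho}$ ($\nu^{(i)}\vdash p$, $\rho\vdash d-pw$) occurring with iterated Littlewood--Richardson multiplicities. Restricting a section to $E=\langle\sigma_1\rangle\times\cdots\times\langle\sigma_w\rangle$, and using that $S^{\nu}\!\downarrow_{\langle p\text{-cycle}\rangle}$ is free exactly when $\nu\vdash p$ is not a $p$-hook, the Künneth formula for varieties over a direct product identifies the rank variety of each section with a coordinate subspace of $\mathbb{A}^w$ of dimension $\#\{i:\nu^{(i)}\text{ is a }p\text{-hook}\}$. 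Since the variety of a filtered module lies in the union of those of its sections,
\[ \dim V_E(S^\lambda\!\downarrow_E)\ \le\ \max\,\#\{i:\nu^{(i)}\text{ is a }p\text{-hook}\}, \]
the maximum over sections of nonzero multiplicity. This equals $w$ precisely when $\lambda$ can be assembled, in the Littlewood--Richardson sense, out of $w$ $p$-hooks together with a $p$-core. Recognizing that \emph{this} is the branching/$p$-weight question flagged in the abstract, I would convert the \pbp hypothesis into the combinatorial obstruction that every exposed corner of a \pbp diagram is the corner of a full $p\times p$ block, so that peeling rim $p$-hooks drops the $p$-weight more abruptly than in the generic case.

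The genuine difficulty is that this section-by-section bound is \emph{not} tight for \pbp partitions, so it cannot end the argument. Already for $\lambda=(2,2)$ with $p=2$ the two surviving sections, $(2)\boxtimes(2)$ and $(1^2)\boxtimes(1^2)$, have all components equal to $p$-hooks and each contributes the whole plane $\mathbb{A}^2$, yet $c(S^{(2,2)})=1$. The top-dimensional component is destroyed only by the \emph{extensions} binding the sections together: in cohomological terms the connecting maps of the filtration must be surjective in top degree, equivalently some product $\zeta=\zeta_1\cdots\zeta_w$ of the degree-one generators attached to the $w$ cyclic directions annihilates $S^\lambda\!\downarrow_E$ and confines $V_E$ to a proper subvariety. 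The plan, then, is to produce this class $\zeta$ --- equivalently a free ``diagonal'' summand of $S^\lambda\!\downarrow_E$ --- directly from the block pattern, exhibiting the sign-reversing pairing of rim-hook removals that already forces $\chi^{(2,2)}(\text{transposition})=(+1)f^{(2)}+(-1)f^{(1^2)}=0$.

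I expect this last step to be the crux and the main obstacle: one must control the extensions rather than merely the sections, make the rim-hook cancellation work uniformly in $p$ (for $p=2$ it is a single pairing, but for larger $p$ one meets up to $p$ competing removals with alternating signs at each block corner), and ensure the resulting freeness survives reduction modulo $p$ instead of holding only at the level of ordinary characters. A clean reformulation of the \pbp condition as ``\emph{no Littlewood--Richardson assembly of $\lambda$ into $w$ $p$-hooks survives the extensions},'' matching the weight-and-branching description, would be the heart of the proof, and it is the piece I would expect to fight hardest with.
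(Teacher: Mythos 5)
You are right to target only the half ``$\lambda$ is \pbp $\Rightarrow$ $c(S^\lambda)<w$''; that is the only direction the paper proves (Corollary \ref{cor: onewayofconjecture}), and the converse remains open. But your proposal is not a proof of that half: as you yourself concede in the last two paragraphs, it is a reduction plus a program whose decisive step is missing. The reduction to showing $V_E(S^\lambda\!\downarrow_E)$ is a proper subvariety is sound, but your only tool for bounding the variety is the Littlewood--Richardson filtration of $\Res_{Y}(S^\lambda)$ over $Y=\Sigma_p^{\times w}\times\Sigma_{d-pw}$ together with the containment $V_E(M)\subseteq V_E(M')\cup V_E(M'')$ for filtered modules; and, as your own example $\lambda=(2,2)$, $p=2$ shows, for \pbp partitions that union can be all of $\mathbb{A}^w$, because all-hook sections survive with nonzero multiplicity. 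So the inequality you derive carries no information, and the entire burden falls on the unconstructed class $\zeta$ that is supposed to ``control the extensions.'' Nothing in your setup produces it: the rim-hook sign cancellation you invoke (e.g.\ $\chi^{(2,2)}$ vanishing on transpositions) is a statement in the Grothendieck group, and varieties are precisely not Grothendieck-group invariants --- $k C_p$ and $k^{\oplus p}$ have the same class but varieties $\{0\}$ and the full line. There is also a secondary slip: rank-$w$ elementary abelian subgroups of $\Sigma_{pw}$ form a single conjugacy class only for odd $p$; for $p=2$ the regular Klein-type subgroups such as $\langle(12)(34),(13)(24)\rangle\le\Sigma_4$ give a second maximal class, so even the reduction step needs more care than you allow.

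The idea you are missing --- and it is the heart of the paper's argument --- is to replace the extension-sensitive invariant (the support variety of a filtered module) by one that is blind to extensions: block membership. Restrict one step only, to $\Sigma_{d-1}$. By the branching rule (Proposition \ref{prop:abacusproperties}(e)), $\Res_{\Sigma_{d-1}}(S^\lambda)$ has a Specht filtration with factors $S^{\lambda_A}$, one for each removable node $A$, and the paper's combinatorial main result (Theorem \ref{thm: main}, proved by elementary abacus bookkeeping with the $p$-quotient) says that $\lambda$ being \pbp forces \emph{every} $\lambda_A$ to have $p$-weight exactly $w-2$. Since each $S^{\lambda_A}$ lies wholly in a block of weight $w-2$, every composition factor, hence every block component, of $\Res_{\Sigma_{d-1}}(S^\lambda)$ lies in a block of weight $w-2$; and the complexity of \emph{any} module in such a block is at most $w-2$, by the general bound of complexity by the $p$-rank of the defect group. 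No analysis of how the sections are glued is needed, because that bound applies to the restricted module itself, not to its filtration sections. The support-variety step then runs opposite to yours: if $c(S^\lambda)=w$, some rank-$w$ elementary abelian $E$ has $V_E(S^\lambda)$ of full dimension $w$; a rank-$(w-1)$ subgroup $E'\le E$ fixing a point lies in $\Sigma_{d-1}$ and satisfies $\dim V_{E'}(S^\lambda)=w-1$, so the restriction would have complexity at least $w-1$ --- contradicting the bound $w-2$. In short, where your plan requires an extension analysis you cannot supply, the paper trades it for a weight computation (removing any node from a \pbp partition drops the $p$-weight by exactly $2$, never $1$) plus the cheap fact that restricting by one point can lower complexity by at most $1$.
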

Conjecture \ref{conj: Vigre} implies that almost every Specht module has maximal complexity among modules in its block. Indeed it would imply that if $p^2\not\,\mid d$, then all the Specht modules for $\Sigma_d$ have this property. As far as we know the condition we call \pbp has not appeared anywhere in the literature, and it seems quite mysterious. However we will prove that it arises quite naturally from considering the weights of $\Sigma_d$ blocks and the branching theorems. Specifically we prove:

\begin{thm}
\label{thm: main}
Suppose $\lambda \vdash d$ has $p$-weight $w$. Then $\lambda$ is \pbp if and only if $w(\lambda_A) \leq w-2$ for each removable node $A$ of $\lambda$. In this case, $w(\lambda_A)$ is always equal to $w-2$.
\end{thm}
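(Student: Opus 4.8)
The plan is to transfer the entire statement onto the $p$-abacus. Fix a bead number $n$ with $n\equiv 0\pmod p$ and let $B=\{\beta_1>\cdots>\beta_n\}$ be the beta-numbers (first-column hook lengths) of $\lambda$, so that position $\beta$ lies on runner $\beta \bmod p$ in row $\lfloor \beta/p\rfloor$. Let $S_r\subseteq\mathbb{Z}_{\ge 0}$ be the set of occupied rows on runner $r$ and $k_r=|S_r|$; the $k_r$ determine the $p$-core, and the $p$-weight is $w(\lambda)=\sum_{\beta\in B}\lfloor\beta/p\rfloor-\sum_{r=0}^{p-1}\binom{k_r}{2}$. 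A removable node corresponds to a bead at a position $\beta$ with $\beta-1$ empty, and removing it slides that bead one step left. The first step is to record that the resulting weight drop depends only on the runner $r$ of the moved bead: a short computation with the displayed formula gives a drop of $k_{r-1}-k_r+1$ when $1\le r\le p-1$ (a same-row slide) and of $k_{p-1}-k_0+2$ when $r=0$ (the slide that wraps down to the previous row).

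The structural heart is the abacus description of the \pbp condition: with $n\equiv 0\pmod p$, $\lambda$ is \pbp if and only if $S_0=S_1=\cdots=S_{p-1}$. Since, as observed just after Definition~\ref{defin: pxpdef}, a \pbp partition is exactly the $p$-fold blow-up of a partition $\rho$ (each cell replaced by a $p\times p$ block), I would prove this by a direct beta-number calculation showing that the blow-up has beta-set $\bigcup_{s\in S}\{ps,ps+1,\dots,ps+p-1\}$, where $S$ is the beta-set of $\rho$; this is precisely the assertion that all runners carry the same rows. Granting the lemma, the easy direction is immediate: if $S_0=\cdots=S_{p-1}$ then all $k_r$ are equal, no runner $1\le r\le p-1$ can carry a removable bead (as $S_r=S_{r-1}$), and every removable node sits on runner $0$ with drop $k_{p-1}-k_0+2=2$. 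Thus \pbp forces every $w(\lambda_A)$ to equal $w-2$.

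For the converse I would assume every removable node has drop at least $2$ and deduce $S_0=\cdots=S_{p-1}$. If $S_r\not\subseteq S_{r-1}$ for some $1\le r\le p-1$ there is a removable bead on runner $r$ with drop $k_{r-1}-k_r+1$, so the hypothesis forces $k_{r-1}>k_r$; contrapositively, $k_{r-1}\le k_r$ implies $S_r\subseteq S_{r-1}$, hence $k_r=k_{r-1}$ and $S_r=S_{r-1}$. In particular $k_0\ge k_1\ge\cdots\ge k_{p-1}$, with equality at any step forcing equality of the adjacent row-sets, so it suffices to show $k_0=k_{p-1}$. If runner $0$ carried a removable bead its drop would be $k_{p-1}-k_0+2$, so the hypothesis gives $k_{p-1}\ge k_0$ in that case; otherwise $q\mapsto q-1$ is an injection of $\{q\in S_0:q\ge 1\}$ into $S_{p-1}$, whence $k_0\le k_{p-1}+1$. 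Either way $k_0-k_{p-1}\in\{0,1\}$.

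The remaining possibility $k_0-k_{p-1}=1$ is where the congruence $n\equiv 0\pmod p$ does the real work, and I expect this boundary case to be the main obstacle: a single unit decrease occurring at some step $j\in\{1,\dots,p-1\}$ yields $n=pk_0-(p-j)\equiv j\pmod p$, which is incompatible with $n\equiv 0$. Hence $k_0=k_{p-1}$, all $k_r$ coincide, all consecutive $S_r$ coincide, and $\lambda$ is \pbp with every drop equal to $2$. Beyond this boundary analysis, the two genuinely delicate points to get right are the asymmetric wrap-around term in the weight-drop formula on runner $0$ and the verification of the blow-up lemma; once those are in place the deduction is pure bookkeeping.
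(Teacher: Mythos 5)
Your proof is correct, and its skeleton is the same as the paper's: put $\lambda$ on an abacus with a multiple of $p$ beads, compute the weight drop of a removable node as a function of its runner --- your $k_{r-1}-k_r+1$ for $1\le r\le p-1$ and $k_{p-1}-k_0+2$ for $r=0$ are, up to sign, the paper's equations \eqref{eq:changeinweight} and \eqref{eq: residuezero} --- deduce that bead counts are nonincreasing from left to right, and then use the runner-$0$ formula to force all counts (and then all row-sets) to be equal. Your condition $S_0=\cdots=S_{p-1}$ is the same as the ``every row of the display empty or full'' characterization in Remark \ref{rmk: abacusform}, and your beta-set computation for the $p$-fold blow-up is a clean alternative route to Proposition \ref{prop: pxpintermsofabacus}.

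The one genuine difference is that you are more careful at the boundary, and this is worth pointing out. The paper closes its argument by saying that a strict inequality in Proposition \ref{prop: nonincreonpcore} would give $\lambda^{[0]}>\lambda^{[p-1]}$, ``contradicting'' Lemma \ref{lem: removableresidue0}; but that lemma presupposes that a removable node of residue $0$ exists, and $\lambda^{[0]}>\lambda^{[p-1]}$ alone does not supply one: when $\lambda^{[0]}=\lambda^{[p-1]}+1$ it can happen that every bead on runner $0$ below the top row is blocked by a bead one row up on runner $p-1$. Your injection $q\mapsto q-1$ isolates exactly this residual case $k_0-k_{p-1}=1$, and your congruence count $n=pk_0-(p-j)\equiv j\pmod p$ with $1\le j\le p-1$ is precisely what eliminates it --- the one step of the whole argument where the convention $p\mid n$ is indispensable, as you correctly anticipated. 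So your proposal not only reproduces the paper's argument but makes explicit a small step the paper's proof leaves unjustified.
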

Theorem \ref{thm: main} give one direction of Conjecture \ref{conj: Vigre} as a fairly immediate corollary.

\begin{cor}
\label{cor: onewayofconjecture}
Suppose $\lambda \vdash p^2d$ is \pbp, and hence of weight $w=pd$. Then the complexity of $S^\lambda$ is less than $w$.
\end{cor}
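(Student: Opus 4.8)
The plan is to translate the claim into the language of support varieties and then feed in Theorem~\ref{thm: main} through the branching rule. Write $n := p^2 d$, so that $\Sigma_n$ is the ambient group and $w = pd$. Recall that the complexity $c_{\Sigma_n}(S^\lambda)$ equals the dimension of the cohomological support variety $V_{\Sigma_n}(S^\lambda)$, and that by the theorems of Quillen and Alperin--Evens this dimension is detected on elementary abelian $p$-subgroups: $c_{\Sigma_n}(S^\lambda)=\max_E \dim V_E(S^\lambda{\downarrow}_E)$, the maximum taken over elementary abelian $E\le\Sigma_n$. Since $\dim V_E(S^\lambda{\downarrow}_E)\le\operatorname{rank}(E)$ and the $p$-rank of $\Sigma_n$ is exactly $w$, only subgroups of rank $w$ can possibly produce a variety of dimension $w$; everything of smaller rank is automatically bounded by $w-1$. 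Thus it suffices to show that for every elementary abelian $E$ of rank $w$ the restriction $S^\lambda{\downarrow}_E$ does \emph{not} have full support, i.e. $V_E(S^\lambda{\downarrow}_E)\ne V_E(k)=\mathbb{A}^w$.

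The input from Theorem~\ref{thm: main} enters as follows. By the branching rule, $S^\lambda{\downarrow}_{\Sigma_{n-1}}$ has a Specht filtration whose factors are the $S^{\lambda_A}$, one for each removable node $A$. Since $\lambda$ is \pbp, Theorem~\ref{thm: main} gives $w(\lambda_A)\le w-2$ for every such $A$, and the $p$-rank of the defect group of the block of $S^{\lambda_A}$ is $w(\lambda_A)\le w-2$; hence $\dim V_{\Sigma_{n-1}}(S^{\lambda_A})\le w-2$. As support varieties turn filtrations into unions, $\dim V_{\Sigma_{n-1}}(S^\lambda{\downarrow}_{\Sigma_{n-1}})\le w-2$. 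To move this bound back up to $\Sigma_n$ I would use the tensor identity $S^\lambda{\downarrow}_{\Sigma_{n-1}}{\uparrow}^{\Sigma_n}\cong S^\lambda\otimes M^{(n-1,1)}$, where $M^{(n-1,1)}=k[\Sigma_n/\Sigma_{n-1}]$ is the natural permutation module. Combining the tensor-product formula $V_{\Sigma_n}(S^\lambda\otimes M^{(n-1,1)})=V_{\Sigma_n}(S^\lambda)\cap V_{\Sigma_n}(M^{(n-1,1)})$ with the fact that induction realizes $V_{\Sigma_n}(N{\uparrow}^{\Sigma_n})$ as the image of $V_{\Sigma_{n-1}}(N)$, one obtains
\[
\dim\bigl(V_{\Sigma_n}(S^\lambda)\cap V_{\Sigma_n}(M^{(n-1,1)})\bigr)\le w-2 .
\]

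Now fix an elementary abelian $E$ of rank $w$ and decompose $\{1,\dots,n\}$ into its orbits $O_1,\dots,O_r$, of sizes $p^{k_1},\dots,p^{k_r}$ with $\sum_j k_j = w$. Restricting the permutation module gives $M^{(n-1,1)}{\downarrow}_E=\bigoplus_j k[O_j]$, and each summand $k[O_j]=k[E/\operatorname{Stab}_E(O_j)]$ has $V_E(k[O_j])$ equal to the linear subspace $L_j\subseteq\mathbb{A}^w$ of codimension $k_j$ attached to the point stabilizer; hence $V_E(M^{(n-1,1)}{\downarrow}_E)=\bigcup_j L_j$. If $S^\lambda{\downarrow}_E$ had full support $\mathbb{A}^w$, then intersecting would give $\dim\bigl(V_E(S^\lambda{\downarrow}_E)\cap V_E(M^{(n-1,1)}{\downarrow}_E)\bigr)=\dim\bigcup_j L_j=w-\min_j k_j$, which by the displayed bound forces $\min_j k_j\ge 2$: every orbit of $E$ must have size at least $p^2$. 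A rank-versus-point count now finishes the odd case. An orbit of size $p^{k}$ contributes $k$ to the rank while consuming $p^k$ of the $n=p^2 d$ available points, and $k/p^{k}\le 2/p^2$ once $k\ge 2$, so $w=\sum_j k_j\le (2/p^2)\sum_j p^{k_j}\le (2/p^2)\cdot p^2 d = 2d$. For $p\ge 3$ this contradicts $w=pd>2d$. Therefore no rank-$w$ subgroup gives full support and $c_{\Sigma_n}(S^\lambda)\le w-1<w$.

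The delicate point is $p=2$. There the inequality $2d\le pd$ is an equality, and the rank-$w$ subgroups built entirely from regular $C_2\times C_2$ actions on blocks of four points genuinely exist, have every orbit of size $p^2=4$, and are \emph{not} excluded by the count above: the weight drop of exactly $2$ supplied by Theorem~\ref{thm: main} is precisely cancelled by the codimension-$2$ point stabilizers, so the intersection bound becomes an equality and yields no contradiction. I expect this to be the main obstacle, and to resolve it one must squeeze out more than first-order branching gives---for instance by iterating the restriction to $\Sigma_{n-2}$ and analysing the two-node-removal filtration of $S^\lambda{\downarrow}_{\Sigma_{n-2}}$, or by a direct examination of $S^\lambda{\downarrow}_E$ for these specific subgroups, to show such $E$ still cannot carry full support.
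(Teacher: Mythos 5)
Your overall skeleton---reduce to maximal-rank elementary abelian subgroups via Alperin--Evens, bound the complexity of $\Res_{\Sigma_{n-1}}S^\lambda$ (where $n=p^2d$) by $w-2$ using the branching filtration together with Theorem~\ref{thm: main}, then derive a contradiction from a hypothetical rank-$w$ subgroup $E$ with full support---is exactly the paper's, but your mechanism for extracting the contradiction is genuinely different. The paper does it in one soft step: if $V_E(S^\lambda)=\mathbb{A}^w$, choose a corank-one subgroup $E'\le E$ fixing a point (for odd $p$ a rank-$w$ subgroup of $\Sigma_{p^2d}$ has all orbits of size exactly $p$, so the kernel of the action on any single orbit works); then $E'$ is conjugate into $\Sigma_{n-1}$ and $V_{E'}(S^\lambda)=V_{E'}(k)$ has dimension $w-1$, so $\Res_{\Sigma_{n-1}}S^\lambda$ would have complexity at least $w-1$, contradicting the bound $w-2$ from \cite{HNsupportvariety}. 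You instead induce back up through the tensor identity, intersect varieties via Avrunin--Scott, and compute $V_E(M^{(n-1,1)}{\downarrow}_E)$ orbit by orbit. Granting those standard facts, your steps are correct, and for $p\ge 3$ your counting argument does close the proof; what the longer route buys is an explicit structural conclusion (every orbit of a hypothetical full-support $E$ must have size at least $p^2$) at the cost of considerably more machinery than the paper's restriction argument needs.

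The genuine gap is the one you flag yourself: $p=2$. There $2k\le 2^k$ becomes an equality at $k=2$, rank-$w$ subgroups all of whose orbits are regular Klein four-orbits do exist, and your count cannot exclude them; since Corollary~\ref{cor: onewayofconjecture} is asserted for every prime, your proof is incomplete as written. But your diagnosis cuts deeper than you seem to realize: the paper's own argument has the identical blind spot, only hidden. Its claim that full support on a rank-$w$ subgroup forces $\Res_{\Sigma_{n-1}}S^\lambda$ to have complexity $w-1$ requires a corank-one subgroup of $E$ with a fixed point, i.e.\ an orbit of $E$ of size exactly $p$. When $p=2$ and every $E$-orbit is a regular Klein four-orbit, every point stabilizer equals the kernel of the action on its orbit and so has corank two (and no corank-one subgroup can sit inside an index-four stabilizer), so restriction only yields complexity $\ge w-2$ and no contradiction results. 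Thus for odd $p$ both proofs are complete (yours by a longer route), while at $p=2$ both need a supplementary argument ruling out full support on these all-Klein-four subgroups---for instance along the lines you sketch, though note that iterated branching to $\Sigma_{n-2}$ is not controlled by Theorem~\ref{thm: main} alone, since $\lambda_A$ need not be $p\times p$ and removing a second node can even increase the weight.
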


\begin{rmk}
\label{rmk: d<p^2}When $d<p^2$ Conjecture \ref{conj: Vigre} can be deduced immediately from the dimensions of the corresponding Specht modules, as was noticed by the VIGRE group and proven by Lim \cite[Thm. 4.1]{LimVarietySpechtpreprint}.
\end{rmk}

\begin{rmk}
\label{rmk: Lim}
The conjecture was verified for $d=p^2$ by Lim in \cite[Thm. 3.1]{LimVarietySpechtpreprint}, where he showed  $S^{(p^p)}$ has complexity $p-1$ even though $(p^p)$ has weight $p$. Lim's proof uses the fact that $\Res_{\Sigma_{p^2-1}}(S^{(p^p)}) \cong S^{(p^{p-1},p-1)}$ and $(p^{p-1},p-1)$ has weight $p-2$. Our proof is essentially a large generalization of this observation about the branching theorems.

\end{rmk}

\section{Blocks and abaci}
\subsection{}
In this section we review the description of the blocks of $k\Sigma_d$ and the representation of partitions of $d$ on the abacus. For further details on this, and as good sources for the representation theory of the symmetric group, see the books \cite{JamesKerberbook} and \cite{Jamesbook}, where definitions of basic terms not defined here, like removable node, addable node, residue of a node, etc.., can be found.

 Let $\lambda=(\lambda_1, \lambda_2, \ldots, \lambda_t) \vdash d$ with $\lambda_t>0$. Choose an integer $r \geq t$ and define a \emph{sequence of beta-numbers} by:
$$\beta_i=\lambda_i -i+r$$ for $1 \leq i \leq r$. For $r=s$ the $\beta$-numbers are just the first-column hook lengths in the Young diagram of $\lambda$. It is easily seen that $\lambda$ can be recovered uniquely from a corresponding sequence of beta numbers.

Now take an abacus with $p$ runners, labeled $0, 1, \ldots, p-1$ from left to right. Label the positions on the abacus so that row $s$ (counting down from the top)  runner $i$ is labeled $i+(s-1)p$. To represent $\lambda$ on the abacus with $r$ beads, simply place a bead at position $\beta_i$ for each $1 \leq i \leq r$.

Sliding all the beads in the abacus display as far up as possible, one obtains an abacus display for the $p$-core $\tilde{\lambda}$. Moving a bead up a single spot corresponds to removing a rim $p$-hook from the Young diagram of $\lambda$. Thus, if $w$ is the total number of such moves, then $\tilde{\lambda} \vdash d-pw$. The number $w$ is called the $p$-weight of $\lambda$. The following proposition collects some well-known facts, all found in the book \cite{JamesKerberbook}.

\begin{prop}
\label{prop:abacusproperties}
Let $\lambda \vdash d$  have $p$-weight $w$, $p$-core $\tilde{\lambda} \vdash d-pw$, and exactly $t$ parts. Let $\mu \vdash d$. Then:
\begin{itemize}
  \item[(a.)] The Specht modules $S^\lambda$ and $S^\mu$ lie in the same $p$-block if and only if $\tilde{\lambda}=\tilde{\mu}.$
  \item [(b.)]Suppose $B(\lambda)$ is the block containing $S^\lambda$. Then a defect group of $B(\lambda)$ is isomorphic to a Sylow $p$-subgroup of the symmetric group $\Sigma_{pw}$.
  \item[(c.)] Removing a removable node from $\lambda$ corresponds to sliding a bead in the abacus display of $\lambda$ from position $l$ to (unoccupied) position $l-1$. Adding an addable node corresponds to sliding a bead from position $l$ to (unoccupied) position $l+1$.

  \item[(d.)] If $\lambda$ is represented on an abacus with a multiple of $p$ beads then removable nodes which correspond to beads on runner $i$ have $p$-residue $i$.

  \item[(e.)] Let $\{A_1, A_2, \ldots, A_m\}$ denote the removable nodes of $\lambda$ and let $\lambda_A \vdash d-1$ denote $\lambda$ with removable node $A$ removed.  Then $\Res_{\Sigma_{d-1}}(S^\lambda)$ has a filtration by Specht modules where each $S^{\lambda_{A_i}}$ occurs once.
\end{itemize}
\end{prop}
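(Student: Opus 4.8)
The plan is to observe that the five statements split into two groups: the purely combinatorial facts (c) and (d), which I would prove directly by unwinding the definition of the $\beta$-numbers, and the three structural facts (a), (b), (e), which are standard theorems in the modular representation theory of $\Sigma_d$ that I would establish by appeal to the references \cite{JamesKerberbook} and \cite{Jamesbook}.

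For (c) I would start from the defining relation $\beta_i=\lambda_i-i+r$. Removing the removable node at the foot of row $i$ replaces $\lambda_i$ by $\lambda_i-1$ while fixing all other parts, so the only $\beta$-number that changes is $\beta_i$, which drops to $\beta_i-1$. Since the occupied positions $\beta_1>\beta_2>\cdots>\beta_r$ are distinct, the node is removable precisely when $\lambda_i>\lambda_{i+1}$, i.e.\ when $\beta_i-\beta_{i+1}\geq 2$, which is exactly the condition that position $\beta_i-1$ is unoccupied; this is the assertion. Adding an addable node is the mirror image: one part increases by one, the corresponding bead slides from $\beta_i$ to $\beta_i+1$, and addability is the statement that $\beta_i+1$ was unoccupied. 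For (d) I would compare residues on the two sides. The removable node sits in row $i$ and column $\lambda_i$, hence has $p$-residue $(\lambda_i-i)\bmod p$; its bead occupies position $\beta_i=\lambda_i-i+r$ and so lies on runner $\beta_i\bmod p$. When $p\mid r$ these two quantities are congruent modulo $p$, so the runner index equals the residue, which is precisely the claim.

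For the three representation-theoretic statements I would invoke the classical theory. Part (a) is the Nakayama Conjecture (now a theorem), which I would cite from \cite{JamesKerberbook}; the standard route compares the central characters of the blocks, which are computed combinatorially and depend only on the $p$-core, so that $S^\lambda$ and $S^\mu$ lie in the same block exactly when $\tilde\lambda=\tilde\mu$. Part (b) follows from Brauer's theory of defect groups together with the block structure of $\Sigma_d$: a block of $p$-weight $w$ has its defect group conjugate to a Sylow $p$-subgroup of $\Sigma_{pw}$, which again I would cite rather than reprove. Part (e) is the modular version of the Branching Theorem for Specht modules; over an arbitrary field $\Res_{\Sigma_{d-1}}(S^\lambda)$ carries a Specht filtration whose subquotients are exactly the $S^{\lambda_{A_i}}$, each occurring once, and I would cite the construction of this filtration from \cite{Jamesbook}.

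The main obstacle is that (a), (b), and (e) are genuinely deep results whose full proofs rest on substantial machinery (central characters and $p$-cores for (a), Brauer's theory for (b), the integral Specht filtration for (e)); within the scope of this expository proposition the honest thing is to record them as cited facts. If one were forced to argue from first principles, the Nakayama Conjecture in (a) would be the substantive point, while (c) and (d) remain short and self-contained.
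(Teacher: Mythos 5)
Your proposal is correct and takes essentially the same approach as the paper, which gives no proof at all: it simply records the five parts as ``well-known facts, all found in the book \cite{JamesKerberbook}.'' Your explicit $\beta$-number verifications of (c) and (d) --- removability of the node in row $i$ amounts to $\lambda_i>\lambda_{i+1}$, i.e.\ $\beta_i-\beta_{i+1}\geq 2$, so position $\beta_i-1$ is unoccupied, and when $p\mid r$ the runner $\beta_i \bmod p$ equals the residue $(\lambda_i-i)\bmod p$ --- are accurate and supply slightly more detail than the paper itself, while your citations for (a), (b), (e) match its reliance on the standard references.
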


\subsection{}Motivated by Proposition \ref{prop:abacusproperties}(d), we assume henceforth all our abacus displays have a multiple of $p$ beads. Given such an abacus display for $\lambda$, let $\lambda^{[i]}$ be the number of beads on runner $i$. Knowing these numbers is equivalent to knowing the $p$-core $\tilde{\lambda}$. For each runner we define a partition which captures how far each bead moves when sliding the beads of $\lambda$ all the way up. Define $\lambda_j^i$ as the number of empty spots above the $j$th bead from the top on runner $i$, i.e. the number of times this bead is moved up to obtain the $p$-core. Then $\lambda(i)=(\lambda_1^i, \lambda_2^i, \ldots )$ is a partition and the sequence of partitions $(\lambda(0), \lambda(1), \ldots, \lambda(p-1))$ is called the \emph{$p$-quotient} of $\lambda$. It should be clear that $\lambda$ is uniquely determined by its $p$-core and $p$-quotient, and that the parts of the $p$-quotient are partitions of integers which sum to $w$.

\section{The main theorem}
\label{sec: The main theorem}
\subsection{}
In this section we will prove Theorem \ref{thm: main}. Our first observation is that the $p \times p$ condition is easily described in terms of the abacus display:

\begin{prop}
\label{prop: pxpintermsofabacus} Let $\lambda \vdash d$ have $p$-quotient $(\lambda(0), \lambda(1), \ldots, \lambda(p-1))$.  Then $\lambda$ is \pbp if and only if $\tilde{\lambda}=\emptyset$ and $\lambda(0)=\lambda(1)= \cdots =\lambda(p-1)$.
\end{prop}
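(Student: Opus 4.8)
The plan is to work directly on the abacus with a convenient normalization and to show that the $p \times p$ partitions are exactly those whose beta-set is a disjoint union of \emph{complete rows} of the abacus, i.e. sets of the form $\{p\gamma, p\gamma+1, \ldots, p\gamma+p-1\}$. Throughout I fix a display with $r=pk$ beads, $k$ chosen large enough (padding $\lambda$ with parts equal to $0$); by the discussion preceding the proposition the display, the bead counts $\lambda^{[i]}$, and the quotient partitions $\lambda(i)$ do not depend on this choice. The reformulation I want is the following: saying the Young diagram of $\lambda$ is built from $p\times p$ blocks means precisely that $\lambda$ is the $p\times p$ \emph{inflation} of some partition $\pi=(\pi_1,\ldots,\pi_k)$ (with trailing zeros allowed), that is, $\lambda_{p(a-1)+b}=p\pi_a$ for $1\le a\le k$ and $1\le b\le p$. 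I will also use the standard fact, implicit in the paper's description of the $p$-core via bead counts, that $\tilde{\lambda}=\emptyset$ if and only if every runner carries the same number of beads.

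For the forward direction, I would start from the inflation description and write each index as $i=p(a-1)+b$. Letting $\gamma_a=\pi_a-a+k$ be the beta-numbers of $\pi$ on $k$ beads, a one-line computation gives the key identity $\beta_{p(a-1)+b}=p\gamma_a+(p-b)$. Hence, as $b$ runs over $1,\ldots,p$, the corresponding beta-numbers run over the $p$ consecutive integers $p\gamma_a,\ldots,p\gamma_a+p-1$, so the beta-set is the disjoint union over $a$ of complete abacus rows (disjoint because $\gamma_1>\cdots>\gamma_k\ge 0$). Each such row deposits exactly one bead, at row $\gamma_a$, on \emph{every} runner. Thus all runners carry beads in precisely the rows $\gamma_1>\cdots>\gamma_k$, which yields both conclusions at once: the bead counts $\lambda^{[i]}=k$ agree for all $i$, so $\tilde{\lambda}=\emptyset$, and the beads on each runner sit at the beta-numbers of $\pi$, so $\lambda(i)=\pi$ for every $i$.

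For the converse I would simply run this backwards. If $\tilde{\lambda}=\emptyset$, the bead counts force $\lambda^{[i]}=k$ for all $i$; if moreover $\lambda(0)=\cdots=\lambda(p-1)=\pi$, then the beads on runner $i$ occupy exactly the rows given by the beta-numbers $\gamma_a$ of $\pi$, the same rows on each runner. Reassembling positions, the beta-set is again $\bigcup_a\{p\gamma_a,\ldots,p\gamma_a+p-1\}$, and reading off the partition by reversing the identity above returns $\lambda_{p(a-1)+b}=p\pi_a$, so $\lambda$ is the $p\times p$ inflation of $\pi$. I expect the heart of the argument to be the single observation that $p\times p$ inflation is precisely the operation of replacing each bead of $\pi$'s abacus by a full row of $p$ beads; once that is in hand, the equality of the quotients and the vanishing of the core are simultaneous consequences. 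The only real care needed is bookkeeping — choosing a multiple of $p$ beads and padding with zeros so that the indexing of the parts of $\lambda$ matches the block decomposition of the beta-set, and checking the blocks are genuinely disjoint — which I do not anticipate to be a serious obstacle.
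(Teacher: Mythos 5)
Your proof is correct and takes essentially the same route as the paper's: represent $\lambda$ on an abacus with a multiple of $p$ beads and observe that the $p \times p$ condition is exactly that the display consists of complete rows (the paper records this equivalence in the remark immediately following the proposition). The paper dismisses the verification as a ``straightforward exercise,'' and your beta-number identity $\beta_{p(a-1)+b}=p\gamma_a+(p-b)$ simply supplies the details it omits.
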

\begin{proof}
This is a straightforward exercise. Suppose $\lambda$ is \pbp and write $$\lambda=(p\tau_1^{pa_1}, p\tau_2^{pa_2}, \ldots, p\tau_r^{pa_r}).$$ Then $\lambda$ has $\sum_{i=1}^r pa_i$ parts and $p$-weight $\sum_{i=1}^r pa_i\tau_i$.  Representing $\lambda$ on an abacus with $\sum_{i=1}^r pa_i$ beads, one easily verifies that each $\lambda(i)$ in the $p$-quotient is just $(\tau_1^{a_1}, \tau_2^{a_2}, \ldots, \tau_r^{a_r})$. Conversely given a partition with constant $p$-quotient $\lambda(i)=\tau$ and empty $p$ core, one immediately observes that $\lambda$ is \pbp.
\end{proof}
\begin{rmk}
\label{rmk: abacusform}
The condition on $\lambda$ in Proposition \ref{prop: pxpintermsofabacus} is equivalent to each row of the abacus display being either empty or completely full. For example when $p=5$ the partition $\lambda=(20^{10},10^5,5^5)$ has abacus display shown in Figure \ref{fig: abacusdisplay}, where the corresponding $\tau$ is $(4,4,2,1)$.
\end{rmk}

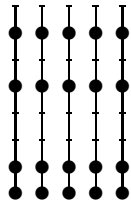
\begin{figure}[ht]
\begin{center}\setlength{\unitlength}{0.07in}
\begin{picture}(10,14)(0,0)

\put(2,0){\line(0,1){14}}
\put(4,0){\line(0,1){14}}
\put(6,0){\line(0,1){14}}
\put(8,0){\line(0,1){14}}
\put(10,0){\line(0,1){14}}
\put(1.8,0){\line(1,0){0.4}}\put(1.8,2){\line(1,0){0.4}}\put(1.8,4){\line(1,0){0.4}}
\put(1.8,6){\line(1,0){0.4}}\put(1.8,8){\line(1,0){0.4}}\put(1.8,10){\line(1,0){0.4}}
\put(1.8,12){\line(1,0){0.4}}\put(1.8,14){\line(1,0){0.4}}

\put(3.8,0){\line(1,0){0.4}}\put(3.8,2){\line(1,0){0.4}}\put(3.8,4){\line(1,0){0.4}}
\put(3.8,6){\line(1,0){0.4}}\put(3.8,8){\line(1,0){0.4}}\put(3.8,10){\line(1,0){0.4}}
\put(3.8,12){\line(1,0){0.4}}\put(3.8,14){\line(1,0){0.4}}

\put(5.8,0){\line(1,0){0.4}}\put(5.8,2){\line(1,0){0.4}}\put(5.8,4){\line(1,0){0.4}}
\put(5.8,6){\line(1,0){0.4}}\put(5.8,8){\line(1,0){0.4}}\put(5.8,10){\line(1,0){0.4}}
\put(5.8,12){\line(1,0){0.4}}\put(5.8,14){\line(1,0){0.4}}

\put(7.8,0){\line(1,0){0.4}}\put(7.8,2){\line(1,0){0.4}}\put(7.8,4){\line(1,0){0.4}}
\put(7.8,6){\line(1,0){0.4}}\put(7.8,8){\line(1,0){0.4}}\put(7.8,10){\line(1,0){0.4}}
\put(7.8,12){\line(1,0){0.4}}\put(7.8,14){\line(1,0){0.4}}

\put(9.8,0){\line(1,0){0.4}}\put(9.8,2){\line(1,0){0.4}}\put(9.8,4){\line(1,0){0.4}}
\put(9.8,6){\line(1,0){0.4}}\put(9.8,8){\line(1,0){0.4}}\put(9.8,10){\line(1,0){0.4}}
\put(9.8,12){\line(1,0){0.4}}\put(9.8,14){\line(1,0){0.4}}

\put(2,12){\circle*{1}}\put(4,12){\circle*{1}}\put(6,12){\circle*{1}}
\put(8,12){\circle*{1}}\put(10,12){\circle*{1}}

\put(2,8){\circle*{1}}\put(4,8){\circle*{1}}\put(6,8){\circle*{1}}
\put(8,8){\circle*{1}}\put(10,8){\circle*{1}}

\put(2,2){\circle*{1}}\put(4,2){\circle*{1}}\put(6,2){\circle*{1}}
\put(8,2){\circle*{1}}\put(10,2){\circle*{1}}

\put(2,0){\circle*{1}}\put(4,0){\circle*{1}}\put(6,0){\circle*{1}}
\put(8,0){\circle*{1}}\put(10,0){\circle*{1}}
\end{picture}

\end{center}

\caption{Abacus for $p=5, \lambda=(20^{10},15^5,10^{15})$}
\label{fig: abacusdisplay}
\end{figure}

\subsection{}
Suppose for the remainder of this section that $\lambda \vdash d$ has $p$-core $\tilde{\lambda} \vdash c$ and weight $w$, so $d=c+pw$. Suppose further that for every removable node $A$, the partition $\lambda_A$ which results from removing the node $A$ satisfies $w(\lambda_A) \leq w-2$. We will show that $\lambda$ has an abacus display as described in Remark \ref{rmk: abacusform}, and thus that $\lambda$ is \pbp. Recall the notation $\lambda^{[i]}$ for the number of beads on runner $i$, thus $\lambda(i) \vdash \lambda^{[i]}$.

\begin{lem}
\label{lem: removablerunner12etc} Suppose $\lambda$ has a removable node $A$  of residue $i> 0$ Then $\lambda^{[i-1]} \geq \lambda^{[i]}+1$.\
\end{lem}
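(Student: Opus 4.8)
The plan is to translate the removable node into a single move on the abacus, compute exactly how the $p$-weight changes under that move, and then feed the result into the standing hypothesis that $w(\lambda_A)\leq w-2$.

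First I would invoke Proposition \ref{prop:abacusproperties}(c) and (d): a removable node $A$ of residue $i>0$ corresponds to a bead on runner $i$, say in row $s$, whose left neighbour (runner $i-1$, row $s$) is empty, and removing $A$ is realized by sliding that bead one position to the left into that empty spot. Because $i>0$ there is no wrap-around: the bead moves within the single row $s$, from runner $i$ to runner $i-1$. Consequently only the bead configurations on runners $i-1$ and $i$ are altered, every other runner is untouched, and $w(\lambda_A)-w$ is entirely accounted for by the change on these two runners.

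The heart of the argument is the weight computation. Since the $p$-weight is the sum $w=\sum_j |\lambda(j)|$ of the sizes of the components of the $p$-quotient, I would compute the change on runner $i$ (which loses its row-$s$ bead) and on runner $i-1$ (which gains a bead in row $s$) separately. On each runner the relevant data are the number of empty positions above row $s$ and the number of occupied positions below it: deleting the bead in row $s$ on runner $i$ creates an empty slot that raises the ``empty above'' count of every bead below it, while inserting a bead in row $s$ on runner $i-1$ lowers the corresponding count there, and one must also track the change in the moved bead's own contribution. Using that the positions above row $s$ on each runner total $s-1$, and that the bead counts are $\lambda^{[i]}$ and $\lambda^{[i-1]}$, I expect all dependence on $s$ and on the ``above'' data to cancel, leaving the clean identity
\[
w(\lambda_A)=w+\lambda^{[i]}-\lambda^{[i-1]}-1.
\]
The feature I expect to matter most is that this difference is independent of $s$ and of the particular removable node chosen on runner $i$: it sees only the two runner totals.

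Granting the formula, the lemma is immediate, since the standing hypothesis $w(\lambda_A)\leq w-2$ forces $\lambda^{[i]}-\lambda^{[i-1]}-1\leq -2$, that is $\lambda^{[i-1]}\geq \lambda^{[i]}+1$. I anticipate the only genuine obstacle to be the bookkeeping that yields the displayed identity, namely verifying that the shifts in the ``empty above'' counts of the beads below row $s$ on the two runners combine with the change in the moved bead's own contribution so that the $s$-terms and the ``above'' terms cancel exactly. Once that identity is established, the conclusion is a one-line consequence of the hypothesis.
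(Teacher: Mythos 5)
Your proposal is correct and follows essentially the same route as the paper: the paper likewise realizes the node removal as sliding the bead one spot left within row $s$ (using that $i>0$ rules out wrap-around), tracks the weight changes of the moved bead and of the beads below row $s$ on runners $i-1$ and $i$, and arrives at exactly your identity $w(\lambda_A)-w(\lambda)=\lambda^{[i]}-\lambda^{[i-1]}-1$, from which the hypothesis $w(\lambda_A)\leq w-2$ gives the conclusion. The bookkeeping you flag as the only obstacle does cancel as you predict, so nothing is missing.
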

\begin{proof}Removing a node corresponds to sliding a bead one spot to the left on the abacus. Suppose the bead corresponding to $A$ is in row $s$ of the abacus display, so there is no bead on runner $i-1$ in row $s$. Further, suppose runner $i-1$ has $y$ beads in the first $s-1$ rows and $\lambda^{[i-1]}-y$ beads in rows $>s$. Assume runner $i$ has $x$ beads in the first $s-1$ rows and $\lambda^{[i]}-x-1$ beads in rows $>s$. Runners $i-1$ and $i$ in the abacus display for $\lambda$ are illustrated in Figure \ref{fig: columni}.

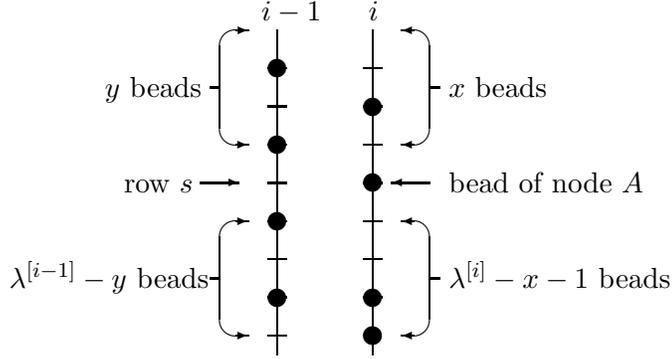
\begin{figure}[ht]

\begin{center}\setlength{\unitlength}{0.10in}
\begin{picture}(15,18)(0,0)

\put(5,-1){\line(0,1){17}}
\put(10,-1){\line(0,1){17}}
\put(4.5,2){\line(1,0){1}}\put(4.5,4){\line(1,0){1}}
\put(4.5,6){\line(1,0){1}}\put(4.5,8){\line(1,0){1}}
\put(4.5,10){\line(1,0){1}}\put(4.5,12){\line(1,0){1}}\put(4.5,14){\line(1,0){1}}
\put(4.5,0){\line(1,0){1}}
\put(9.5,2){\line(1,0){1}}\put(9.5,4){\line(1,0){1}}
\put(9.5,6){\line(1,0){1}}\put(9.5,8){\line(1,0){1}}
\put(9.5,10){\line(1,0){1}}\put(9.5,12){\line(1,0){1}}\put(9.5,14){\line(1,0){1}}

\put(10,8){\circle*{1}}
\put(10,12){\circle*{1}}
\put(10,2){\circle*{1}}
\put(10,0){\circle*{1}}
\put(5,6){\circle*{1}}
\put(5,10){\circle*{1}}
\put(5,2){\circle*{1}}
\put(5,14){\circle*{1}}
\put(10,0){\circle*{1}}

\put(4.2,16.5){$i-1$}
\put(9.8,16.5){$i$}
\put(-3,7.5){row $s$}
\put(1,8){\vector(1,0){2}}

\put(3,13){\oval(2,6)[l]}
\put(1.5,13){\line(1,0){0.5}}
\put(-4,12.5){$y$ beads}

\put(3,16){\vector(1,0){0.5}}
\put(3,10){\vector(1,0){0.5}}
\put(3,6){\vector(1,0){0.5}}
\put(3,0){\vector(1,0){0.5}}
\put(12,16){\vector(-1,0){0.5}}
\put(12,10){\vector(-1,0){0.5}}
\put(12,6){\vector(-1,0){0.5}}
\put(12,0){\vector(-1,0){0.5}}

\put(12,13){\oval(2,6)[r]}
\put(13,13){\line(1,0){0.5}}
\put(14,12.5){$x$ beads}

\put(14,7.5){bead of node $A$}
\put(13,8){\vector(-1,0){2}}
\put(3,3){\oval(2,6)[l]}
\put(1.5,3){\line(1,0){0.5}}
\put(-9,2.5){$\lambda^{[i-1]}-y$ beads}

\put(12,3){\oval(2,6)[r]}
\put(13,3){\line(1,0){0.5}}
\put(14,2.5){$\lambda^{[i]}-x-1$ beads}

\end{picture}

\end{center}

\caption{Removal of a node of residue $i>0$}
\label{fig: columni}
\end{figure}

Then the weight of bead $A$ changes from $s-x-1$ to $s-y-1$. In $\lambda_A$, the bottom $\lambda^{[i-1]}-y$ beads on runner $i-1$ are now weight one less and the bottom  $\lambda^{[i]}-x-1$ beads on runner $i$ are weight one more. Thus the change in weight between $\lambda$ and $\lambda_A$ is:

\begin{eqnarray}
\label{eq:changeinweight}
  w(\lambda_A)-w(\lambda) &=& s-y-1+\lambda^{[i]}-x-1-(s-x-1+\lambda^{[i-1]}-l)\\
   &=& \lambda^{[i]}-\lambda^{[i-1]}-1.  \nonumber
\end{eqnarray}

 Our assumption on $\lambda_A$ gives $\lambda^{[i]}-\lambda^{[i-1]}-1 \leq -2$, and thus $\lambda^{[i-1]} \geq \lambda^{[i]}+1$.
\end{proof}

\begin{prop}
\label{prop: nonincreonpcore}The number of beads on each runner in the abacus display of $\lambda$ is nonincreasing from left to right, i.e. $\lambda^{[i-1]} \geq \lambda^{[i]}.$
\end{prop}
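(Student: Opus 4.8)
The plan is to fix an index $i$ with $1 \leq i \leq p-1$ and prove $\lambda^{[i-1]} \geq \lambda^{[i]}$, then note that since $i$ is arbitrary this gives the asserted monotonicity $\lambda^{[0]} \geq \lambda^{[1]} \geq \cdots \geq \lambda^{[p-1]}$. The natural split is according to whether $\lambda$ possesses a removable node of residue $i$. Lemma \ref{lem: removablerunner12etc} already handles one side of this dichotomy, so the only real content is in supplying an argument for the complementary case, where the lemma says nothing.

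First I would dispose of the easy case. If $\lambda$ has a removable node $A$ of residue $i$, then Lemma \ref{lem: removablerunner12etc} gives at once $\lambda^{[i-1]} \geq \lambda^{[i]} + 1 \geq \lambda^{[i]}$, which is in fact strictly stronger than needed. It is precisely here, through the lemma, that the standing hypothesis $w(\lambda_A) \leq w-2$ for every removable node $A$ is used.

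The main case, and the one that requires a genuinely new (though short) observation, is when $\lambda$ has \emph{no} removable node of residue $i$. By Proposition \ref{prop:abacusproperties}(c) and (d), a removable node of residue $i$ is the same thing as a bead on runner $i$ whose immediate left neighbour, namely the position on runner $i-1$ in the same row, is empty. Hence the absence of any such node means that for every bead on runner $i$, say one lying in row $s$, the position on runner $i-1$ in that same row $s$ must be occupied. Sending each runner-$i$ bead in row $s$ to the runner-$(i-1)$ bead in row $s$ therefore defines a map from the beads on runner $i$ to the beads on runner $i-1$, and it is injective since distinct rows produce distinct images. Counting beads gives $\lambda^{[i]} \leq \lambda^{[i-1]}$, exactly as desired. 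Note this case uses nothing about the weight hypothesis; it is purely a feature of the bead configuration.

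Combining the two cases over all $i$ in the range $1 \leq i \leq p-1$ completes the proof. I do not expect a serious obstacle: the single point one must not overlook is that the ``no removable node of residue $i$'' situation falls outside the scope of Lemma \ref{lem: removablerunner12etc} and must be argued directly, and the key is recognizing that this situation forces every runner-$i$ bead to sit directly to the right of a runner-$(i-1)$ bead, yielding the weak inequality by a counting argument.
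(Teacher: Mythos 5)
Your proof is correct and is essentially the paper's argument read contrapositively: the paper supposes $\lambda^{[i-1]} < \lambda^{[i]}$ and observes that the surplus beads on runner $i$ force a removable node of residue $i$, contradicting Lemma \ref{lem: removablerunner12etc}, which is exactly your two-case split (the lemma when such a node exists, the bead-counting injection when it does not). Your write-up merely makes explicit the pigeonhole step that the paper leaves implicit.
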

\begin{proof} If $\lambda^{[i-1]} < \lambda^{[i]}$ then the extra beads on runner $i$ ensure $\lambda$ has a removable node of residue $i$, which then contradicts Lemma \ref{lem: removablerunner12etc}.
\end{proof}

Next we consider the situation of a removable node of residue zero.

\begin{lem}
\label{lem: removableresidue0}Suppose $\lambda$ has a removable node of residue zero. Then $\lambda^{[0]}=\lambda^{[p-1]}.$
\end{lem}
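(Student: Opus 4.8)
The plan is to mimic the proof of Lemma \ref{lem: removablerunner12etc}, computing the change in weight $w(\lambda_A)-w(\lambda)$ directly on the abacus, but accounting for the fact that a residue-zero removal is a ``wrap-around'' move. First I would translate the hypothesis into abacus language via Proposition \ref{prop:abacusproperties}(c),(d): a removable node of residue $0$ corresponds to a bead on runner $0$, say in row $s$, that slides left to the empty position immediately preceding it. Since position $(s-1)p$ on runner $0$ is preceded by position $(s-1)p-1$, which sits on runner $p-1$ in row $s-1$, this move sends the bead from runner $0$ (row $s$) to runner $p-1$ (row $s-1$); in particular $s\geq 2$ and row $s-1$ of runner $p-1$ is unoccupied.

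Next I would record bead counts as in Figure \ref{fig: columni}. Let $x$ be the number of beads of runner $0$ lying above row $s$, so there are $\lambda^{[0]}-x-1$ beads of runner $0$ below the bead of $A$; and let $y$ be the number of beads of runner $p-1$ lying above row $s-1$, so there are $\lambda^{[p-1]}-y$ beads of runner $p-1$ in rows $\geq s$. The weight of bead $A$ (the number of empty spots above it on its runner) then changes from $s-x-1$ to $s-y-2$, where the extra $-1$ relative to the residue $i>0$ computation arises precisely because bead $A$ now sits one row higher. Removing $A$ raises by one the weight of each of the $\lambda^{[0]}-x-1$ lower beads on runner $0$, while inserting it lowers by one the weight of each of the $\lambda^{[p-1]}-y$ lower beads on runner $p-1$. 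Summing these contributions, the $x$ and $y$ terms cancel and I expect to obtain
\[
w(\lambda_A)-w(\lambda)=\lambda^{[0]}-\lambda^{[p-1]}-2.
\]

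Finally I would feed in the standing hypothesis $w(\lambda_A)\leq w-2$, that is $w(\lambda_A)-w(\lambda)\leq -2$, which forces $\lambda^{[0]}\leq \lambda^{[p-1]}$. The reverse inequality $\lambda^{[0]}\geq \lambda^{[p-1]}$ is immediate from Proposition \ref{prop: nonincreonpcore}, so the two together yield $\lambda^{[0]}=\lambda^{[p-1]}$, as claimed.

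I expect the only genuine obstacle to be the careful bookkeeping of the wrap-around: one must track that the bead changes both its runner (from $0$ to $p-1$) and its row (from $s$ to $s-1$), and check that the row change is exactly what contributes the additional $-1$, so that the threshold is $-2$ rather than the $-1$ of Lemma \ref{lem: removablerunner12etc}. Everything else is a mirror image of the computation already carried out there. As a sanity check, this is consistent with the final clause of Theorem \ref{thm: main} and with Remark \ref{rmk: Lim}: when $\lambda$ is genuinely \pbp all runners carry equally many beads, so every removable node has residue $0$ and the displayed formula returns $w(\lambda_A)-w(\lambda)=-2$ exactly.
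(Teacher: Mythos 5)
Your proposal is correct and takes essentially the same route as the paper: the paper likewise computes $w(\lambda_A)-w(\lambda)=\lambda^{[0]}-\lambda^{[p-1]}-2$ on the abacus (leaving the wrap-around picture analogous to Figure \ref{fig: columni} to the reader) and then combines this with Proposition \ref{prop: nonincreonpcore} and the standing hypothesis $w(\lambda_A)\leq w-2$ to conclude $\lambda^{[0]}=\lambda^{[p-1]}$. Your explicit bookkeeping of the row change from $s$ to $s-1$, which produces the extra $-1$ relative to Lemma \ref{lem: removablerunner12etc}, is exactly the detail the paper omits, and it is carried out correctly.
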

\begin{proof}Suppose $\lambda$ has a removable node of residue zero, so the abacus display has a bead in row $s+1$ of runner zero, and no bead in row $s$ of runner $p-1$. We let the reader draw the corresponding picture to Figure \ref{fig: columni}, and observe that
\begin{equation}
\label{eq: residuezero}
w(\lambda_A)-w(\lambda)=\lambda^{[0]}-\lambda^{[p-1]}-2.
\end{equation}
 By Prop. \ref{prop: nonincreonpcore} plus our assumption that $w(\lambda_A)-w(\lambda) \leq -2$, we obtain the result.
\end{proof}
If any of the inequalities in Proposition \ref{prop: nonincreonpcore} were strict, then we would have $\lambda^{[0]}>\lambda^{[p-1]}$, which would contradict Lemma \ref{lem: removableresidue0}. Thus we have proven that:

\begin{equation}
\label{eq: all lambaeq}
\lambda^{[0]}=\lambda^{[1]} \cdots =\lambda^{[p-1]}.
\end{equation}
Comparing equations \eqref{eq:changeinweight} and \eqref{eq: all lambaeq}, we discover that all removable nodes of $\lambda$ have weight zero, since removable nodes of any other residue decrease the weight of the partition by only one. So the abacus display of $\lambda$ has the same number of beads on each runner, and all the removable nodes have residue zero. This forces the abacus display to be as described in Remark \ref{rmk: abacusform}, and completes the proof of Theorem \ref{thm: main}. Notice that \eqref{eq: residuezero} and \eqref{eq: all lambaeq} implies $w(\lambda_A)$ is always exactly $w-2$, although our assumption was only that it is $\leq w-2$.

\section{Complexity}
\subsection{}
Finally we prove Corollary \ref{cor: onewayofconjecture}, which gives one direction of Conjecture \ref{conj: Vigre}. A module has complexity $c$ if the dimensions in a minimal projective resolution of that module are bounded by a polynomial of degree $c-1$. The complexity of the trivial module is known to be the maximal rank of an elementary abelian subgroup, which for $\Sigma_{pd}$ is just $d$. In general the complexity of a module for the symmetric group in a block of weight $w$ is at most $w$. Good  for this theory are \cite{Bensonrepteoryvolume2} or \cite{EvensCohobook}.

So suppose $\lambda \vdash p^2d$ is \pbp and so has weight $w=pd$. By Proposition \ref{prop:abacusproperties}(e) and Theorem \ref{thm: main}, all summands of $\Res_{\Sigma_{p^2d-1}}S^\lambda$ lie in blocks of weight $w-2$. By \cite[4.2.1d]{HNsupportvariety}, $\Res_{\Sigma_{p^2d-1}}S^\lambda$ has complexity at most $w-2$. An elementary argument using support varieties (see \cite{LimVarietySpechtpreprint} for example) implies that $S^\lambda$ cannot have complexity $w$. If it did, then for $E$ a maximal elementary abelian subgroup of rank $w$, the support variety $V_E(S^\lambda)$ would have dimension $w$, which would force the complexity of $\Res_{\Sigma_{p^2d-1}}S^\lambda$ to be $w-1.$

\begin{rmk}For the case $\lambda=p^p$, dimension considerations immediately give the complexity of $S^{(p^{p-1},p-1)}$, which lets Lim deduce \cite{LimVarietySpechtpreprint} that the complexity of $S^{(p^p)}$ is exactly $p-1$. This part of the argument does not appear to generalize, i.e. in general it large powers of $p$ can divide the dimension of $S^{\lambda_A}$ when $\lambda$ is \pbp.
\end{rmk}

\section{Problems}
There are several obvious problems left unsolved.

\begin{prob} Resolve the other direction of Conjecture \ref{conj: Vigre}.
\end{prob}
\begin{prob}Suppose $\lambda$ is \pbp of weight $w$. Is the complexity of $S^\lambda$ equal to $w-1$, or can it drop further?
\end{prob}
\begin{prob}One can generalize the definition of \pbp. For example the first obvious generalization would be to require $\lambda$ be \pbp and each $\lambda(i)$ be \pbp. (And recursively for higher generalizations.) Can one say anything interesting about these situations? Perhaps the complexity drops by even more in this case?
\end{prob}

\bibliography{references0808}
\bibliographystyle{plain}
\end{document}